\newtheorem{lemma}{Lemma}
\newtheorem{theorem}{Theorem}
\newtheorem{corollary}{Corollary}
\newtheorem{proposition}{Proposition}
\title[Self-dual binary quadratic operads]{Self-dual binary quadratic operads}
\author{P.S. Kolesnikov}%
\email{pavelsk@math.nsc.ru}%
\begin{document}

\begin{abstract}
We describe those binary quadratic operads generated by 
a~two-dimensional space that are isomorphic to their 
Koszul dual operads.

\noindent{\bf Keywords:} nonassociative algebra, operad, Koszul duality.
\end{abstract}

\maketitle

\section{Introduction} 

Varieties of linear algebras defined by polylinear identities of degree 3
(e.g., associative, alternative, Novikov, Poisson, et al.) 
give rise to corresponding binary quadratic operads 
\cite{GK1994, LodVal08}.
The most common case in practice is related with varieties of algebras with 
one binary operation. 
The corresponding operads $\mathcal P$ are generated by 
1- or 2-dimensional $S_2$-module $V(2)=\mathcal P(2)$.
In this paper, we solve the following natural question: 
which binary quadratic operads
$\mathcal P$
governing varieties of algebras with one binary operation
are isomorphic to their Koszul dual operads $\mathcal P^!$? 

For example, it is well known that the operads of associative algebras $\mathrm{As}$
coincides with $\mathrm{As}^!$, the same holds for the operad of Poisson algebras 
$\mathrm{Pois}$. For the operad of (left) Novikov algebras $\mathrm{Nov}$, it is known
that $\mathrm{Nov}^!=\mathrm{Nov}^{op}$, the opposite operad of right Novikov 
algebras \cite{Dzh2011}.

For $\dim V(2)=1,2$, the isomorphism between $\mathcal P$ and $\mathcal P^!$ 
is possible only if $S_2$-module $V(2)$ is isomorphic to its 
skew transpose dual $V(2)^\vee$, i.e., the decomposition 
of $V(2)$ into irreducible $S_2$-modules is $ M_+\oplus M_-$, 
where $M_{\pm }=\Bbbk u_\pm $ are 1-dimensional spaces with 
$(12)u_\pm =\pm u_{\pm}$.

We will explicitly describe {\em self-dual} operads $\mathcal P$, 
i.e., those isomorphic to $\mathcal P^!$. 
First, we split the operads that include self-dual ones into two disjoint classes.
Operads of the first and second class are in one-to-one correspondence with 
the points of the Grassmannian $G(2,4)\subset \mathbb P^5$ and of $G(2,4)\times S(1,1)$, 
respectively. Here $S(1,1)\subset \mathbb P^3$ 
is the Segre variety representing $\mathbb P^1\times \mathbb P^1$.
Next, we determine equations defining explicitly those points in these varieties 
that correspond to self-dual operads. These equations turn to be linear (modulo 
quadratic relations defining Grassmannian and Segre variety).

In this paper, the base field $\Bbbk $ is of characteristic $\ne 2,3$.

\section{Binary quadratic operads}
In this section, we mainly follow notations of \cite{GK1994}, where all necessary 
definitions may be found. 

Let $\mathcal F$ be the free (symmetric) operad generated by a $\mathbb Z_+$-graded space 
$V = \bigoplus_{n\ge 1} V(n)$, where $V(n)=0 $ for $n\ne 2$. 
(By the definition of an operad, $\dim \mathcal F(1)=1$ since it necessarily contains
the identity.)
We will consider the 
most common case when $\dim V(2)=2$, and the symmetric group $S_2$ acts on $V(2)$
by permutation of coordinates: $V(2)=\mathrm{span}\,(\mu, \mu' = (12)\mu )$. 
Then 
\[
 \mathcal F(3) = \Bbbk S_3\otimes _{\Bbbk S_2} (V(2)\otimes V(2)), 
\]
where $(12)\in S_2$ acts on $V(2)\otimes V(2)$ as $\mathrm{id}\otimes (12)$.

Suppose $U$ is an $S_3$-invariant subspace of $\mathcal F(3)$, and let $I=I(U)$ 
be the operad ideal of $\mathcal F$ generated by $U$. 
The quotient operad $\mathcal P = \mathcal F/I$ is denoted by $\mathcal P(V,U)$. 

Given an operad $\mathcal P = \mathcal P(V,U)$, its Koszul dual operad $\mathcal P^!$
is defined as $\mathcal P(V^\vee, U^\perp)$, 
where 
$U^\perp \subseteq \mathcal F(3)^\vee \simeq \Bbbk S_3\otimes _{\Bbbk S_2} (V(2)^\vee \otimes V(2)^\vee )$
is the orthogonal complement of~$U$. 

Let us choose the following linear basis of $\mathcal F(3)$:
\[
\begin{gathered}
e_1 =  1\otimes_{\Bbbk S_2}(\mu\otimes \mu), \quad
e_2 =  (12)e_1, \quad
e_3 =  1\otimes_{\Bbbk S_2}(\mu'\otimes \mu ), \quad
e_4 =  (12)e_3, \\
e_{4+i}  = (13)e_i, \quad e_{8+i}  = (23)e_i, \quad i=1,2,3,4.
\end{gathered}
\]
Fix  a linear basis $f_1,\dots, f_{12}$ in $\mathcal F(3)^\vee$ constructed in the same way 
as $e_1,\dots , e_{12}$
with $\nu $ instead of $\mu $. Denote by $\Sigma $ the ($12\times 12$)-matrix with entries 
$\langle f_i,e_j\rangle $. It is easy to compute that
\[
 \Sigma = \mathrm{diag}\,(1,-1,-1,1,-1,1,1,-1,-1,1,1,-1).
\]

The following statement is a standard exercise in the theory of 
characters.

\begin{lemma}[c.f. \cite{Mal1950}]\label{lem:Lem1}
The $S_3$-module $\mathcal F(3)$ has the following decomposition into 
irreducible $S_3$-modules:
\[
 \mathcal F(3) = 2M_+ \oplus 2M_-\oplus 4 M_2,
\]
where $M_+$ is the trivial module, $M_-$ is the sign module, 
$M_2 = \pi(2,1) $ is the 2-dimensional irreducible $S_3$-module 
determined by a partition $(2,1)$.
\end{lemma}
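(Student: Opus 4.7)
The plan is to prove the decomposition by the induction-in-stages formula, reducing the question to a well-known calculation for $S_2 \hookrightarrow S_3$. Since the construction $\mathcal F(3) = \Bbbk S_3 \otimes_{\Bbbk S_2}(V(2)\otimes V(2))$ is precisely $\mathrm{Ind}_{S_2}^{S_3}(V(2)\otimes V(2))$, and induction commutes with direct sums, it suffices to decompose the $4$-dimensional $S_2$-module $V(2)\otimes V(2)$ and then induce each irreducible summand.

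First I would observe that the $S_2$-action on $V(2)\otimes V(2)$ is by $\mathrm{id}\otimes(12)$, i.e.\ on the right factor only. The right factor $V(2)$ decomposes as $M_+\oplus M_-$ with $M_\pm =\Bbbk(\mu\pm\mu')$, so $V(2)\otimes V(2)\simeq \Bbbk^2\otimes(M_+\oplus M_-)\simeq 2M_+\oplus 2M_-$ as $S_2$-modules (here the left factor is just a $2$-dimensional vector space carrying the trivial $S_2$-action). This reduces the problem to computing $\mathrm{Ind}_{S_2}^{S_3}(M_\pm)$.

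Next I would invoke the standard branching/induction facts (immediate from Frobenius reciprocity together with the character table of $S_3$):
\[
\mathrm{Ind}_{S_2}^{S_3}(M_+) \simeq M_+\oplus M_2,\qquad
\mathrm{Ind}_{S_2}^{S_3}(M_-) \simeq M_-\oplus M_2,
\]
and assemble:
\[
\mathcal F(3)\simeq 2\,\mathrm{Ind}_{S_2}^{S_3}(M_+)\oplus 2\,\mathrm{Ind}_{S_2}^{S_3}(M_-)\simeq 2M_+\oplus 2M_-\oplus 4M_2.
\]
A dimension check ($2+2+4\cdot 2=12$) is consistent with $\dim\mathcal F(3)=[S_3:S_2]\cdot 4=12$.

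There is essentially no obstacle; the only care needed is to keep straight that the $S_2$-action is on the second tensor factor only, so that $V(2)\otimes V(2)$ is not the symmetric/antisymmetric decomposition of a tensor square but simply two copies of the regular $S_2$-representation. As a sanity check one could alternatively compute the character of $\mathcal F(3)$ directly on the basis $e_1,\dots,e_{12}$: the identity contributes $12$, a transposition (one of $(12),(13),(23)$) contributes $0$ (each transposition fixes none of the basis cosets pointwise, but swaps them up to the $S_2$-relation, producing a net trace of $0$), and a $3$-cycle contributes $0$ as well; matching this character $(12,0,0)$ against the $S_3$-character table recovers the multiplicities $(2,2,4)$ of $(M_+,M_-,M_2)$.
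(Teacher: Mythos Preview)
Your argument is correct and is precisely the kind of character-theoretic computation the paper has in mind (the paper gives no proof beyond calling it ``a standard exercise in the theory of characters''). Your route via $\mathrm{Ind}_{S_2}^{S_3}$ and Frobenius reciprocity is a clean way to organize that exercise, and the direct character check $(12,0,0)$ you offer as a sanity check is the alternative the paper presumably intends.
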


Let us determine how $M_\pm $ and $M_2$ may be embedded into $\mathcal F(3)$.
All coordinates below are in the basis $e_1,\dots, e_{12}$.

\begin{lemma}[c.f. \cite{Mal1950}]\label{lem:Lem2}
$(1)$ An $S_3$-submodule of $\mathcal F(3)$ isomorphic to $M_\pm$ 
is spanned by a vector 
\[
 u_\pm (x_1,x_2) = (x_1, \pm x_1, x_2, \pm x_2, \pm x_1, x_1, \pm x_2, x_2, \pm x_1, x_1, \pm x_2, x_2),
\]
$(x_1,x_2)\in \Bbbk^2$.

$(2)$ An $S_3$-submodule of $\mathcal F(3)$ isomorphic to $M_2$ 
is spanned by vectors $u_2^i (\bar x)$, $i=1,2$, 
$\bar x = (x_1,x_2,x_3,x_4)\in \Bbbk^4$.
where 
\[
 \begin{aligned}
  u_2^1 = & (x_1, -x_1, x_2, -x_2, x_3, x_3-x_1, x_4, x_4-x_2, x_1-x_3, -x_3, x_2-x_4, -x_4), \\
  u_2^2 = & (x_1-x_3, -x_3, x_2-x_4, -x_4, x_3-x_1, x_3, x_4-x_2, x_4, x_1, -x_1, x_2, -x_2).
 \end{aligned}
\]

\end{lemma}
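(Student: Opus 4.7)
The plan is to reduce the lemma to a direct verification with the explicit $S_3$-action on the basis $e_1,\dots,e_{12}$, which must first be tabulated in full.

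\textbf{Tabulating the $S_3$-action.} The definitions $e_{4+i}=(13)e_i$ and $e_{8+i}=(23)e_i$ immediately record the action of $(13)$ and $(23)$ on the first block $e_1,\dots,e_4$. The action of $(12)$ on this block follows from the $\otimes_{\Bbbk S_2}$ identification: $(12)\otimes(\mu\otimes\mu)=1\otimes(\mu\otimes\mu')$ gives $(12)e_1=e_2$, and similarly $(12)e_3=e_4$. To propagate to the other two blocks, I use $S_3$-relations such as $(132)=(23)(12)$ and $(123)=(13)(12)$; for example, $(12)e_5=(12)(13)e_1=(132)e_1=(23)(12)\otimes(\mu\otimes\mu)=(23)\otimes(\mu\otimes\mu')=e_{10}$. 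A short case analysis produces the complete permutation induced by each generator of $S_3$ on $\{e_1,\dots,e_{12}\}$.

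\textbf{Part (1).} With the action in hand, I verify coordinate-by-coordinate that $(12)u_\pm(x_1,x_2)=\pm u_\pm(x_1,x_2)$ and $(13)u_\pm(x_1,x_2)=\pm u_\pm(x_1,x_2)$. Since $S_3=\langle(12),(13)\rangle$, this shows that $\Bbbk u_\pm(x_1,x_2)$ is an $S_3$-submodule with character matching $M_\pm$, hence isomorphic to $M_\pm$. Letting $(x_1,x_2)\in\Bbbk^2$ vary realizes the full two-dimensional isotypic component $2M_\pm$ supplied by Lemma~\ref{lem:Lem1}, and every $M_\pm$-submodule arises this way.

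\textbf{Part (2).} Fix a standard two-dimensional realization of $M_2$ with prescribed matrices for $(12)$ and $(13)$ in some basis $v_1,v_2$. I then verify that $(12)$ and $(13)$ transform the pair $\bigl(u_2^1(\bar x),u_2^2(\bar x)\bigr)$ by exactly these matrices, with coefficients independent of $\bar x$. This yields an $S_3$-equivariant injection $M_2\hookrightarrow\mathcal{F}(3)$ for each nonzero $\bar x\in\Bbbk^4$; the four-dimensional parameter space fills out the eight-dimensional isotypic component $4M_2$ from Lemma~\ref{lem:Lem1}.

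The main obstacle is the bookkeeping in the first step: although $(13)$ and $(23)$ nearly act as block permutations on the three blocks of four basis vectors, $(12)$ entangles blocks because the $\otimes_{\Bbbk S_2}$ identification converts $(12)\otimes(w_1\otimes w_2)$ into $1\otimes(w_1\otimes(12)w_2)$ only after first absorbing a coset representative. Once the action matrices are in place, both parts of the lemma reduce to mechanical linear-algebra checks, and the dimension count in Lemma~\ref{lem:Lem1} guarantees that the families $u_\pm$ and $u_2^i$ exhaust all isomorphic copies of the respective irreducibles.
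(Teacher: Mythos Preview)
Your approach is correct and essentially the same as the paper's: both tabulate the $S_3$-action on $e_1,\dots,e_{12}$ and reduce the statement to elementary linear algebra. The only cosmetic difference is that the paper \emph{derives} the coordinates by writing unknown $x_i,y_i$ and solving the linear system imposed by the relations \eqref{eq:M2Table} (using the generators $(13),(23)$), whereas you \emph{verify} the given formulas directly (using $(12),(13)$) and then invoke the dimension count of Lemma~\ref{lem:Lem1} to conclude that every copy of $M_\pm$ or $M_2$ arises this way.
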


\begin{proof}
Let us prove (2) for example. 
The only 2-dimensional irreducible $S_3$-module $M_2$
has the following structure: 
\begin{equation}\label{eq:M2Table}
M_2=\Bbbk u_1\oplus \Bbbk u_2,\quad 
\begin{aligned}
  (13):{}& u_1\mapsto u_1-u_2, \\
         & u_2\mapsto -u_2, 
\end{aligned}
\quad
\begin{aligned}
  (23):{}& u_1 \mapsto u_2, \\
         & u_2 \mapsto u_1.
\end{aligned} 
\end{equation}
Suppose the images of $u_i$ in $\mathcal F(3)$ are $u_2^i$, $i=1,2$,
$u_2^1 = x_1e_1+\dots +x_{12}e_{12}$, $u_2^2 = y_1e_1+\dots + y_{12}e_{12}$. 
Then \eqref{eq:M2Table} implies a system of linear equations on $x_i,y_i$, $i=1,\dots, 12$, 
its solution provides the desired statement. 
\end{proof}

\section{Necessary and sufficient conditions of self-duality}
Suppose an operad $\mathcal P = \mathcal P(V,U)$ is isomorphic to its Koszul dual. 
Then $\dim U = \dim U^\perp =6$.
An isomorphism of 
binary operads $\mathcal P$ and $\mathcal P^!$ is determined by an $S_2$-invariant isomorphism of linear spaces 
$\mathcal P(2)=V(2)$ and $\mathcal P^!(2)=V(2)^\vee $. Such an isomorphism is determined by a map 
\[
 g(a,b): \mu \mapsto a \nu + b \nu', 
\]
where $\langle \nu,\mu\rangle =1$, $\langle \nu,\mu'\rangle =0$, 
$\nu' = (12)\nu$, $a,b\in \Bbbk $. Since 
$g(a,b): \mu' \mapsto (12)(a\nu + b\nu') = b\nu + a\nu'$, it is necessary to assume 
$a^2-b^2\ne 0$.

The linear map $g(a,b)$ induces a linear map 
$\Gamma (a,b)=\mathrm{id}\otimes _{\Bbbk S_2} (g(a,b)\otimes g(a,b)): \mathcal F(3)\to \mathcal F(3)^\vee$.
The matrix of $\Gamma (a,b)$ with respect to the bases $e_1,\dots, e_{12}$ and $f_1,\dots, f_{12}$
is also denoted by $\Gamma (a,b)$. It is easy to compute that
\[
\Gamma (a,b) = 
\begin{pmatrix}
 A & 0 & 0\\
 0&A&0\\
 0&0&A
\end{pmatrix},
\quad 
A =
\begin{pmatrix}
  a^2 & ab & ab & b^2 \\
  ab & a^2 & b^2 & ab \\
  ab & b^2 & a^2 & ab \\
  b^2& ab & ab & a^2 
\end{pmatrix},
\]
where 
\[
 \Gamma (a,b): \bar e \mapsto \Gamma (a,b)\bar f, 
 \quad 
 \bar e=\begin{pmatrix}
 e_1 \\ \vdots \\ e_{12}
 \end{pmatrix},
\quad 
\bar f=\begin{pmatrix}
f_1 \\ \vdots \\ f_{12}
\end{pmatrix}.
\]

Obviously, $\mathcal P\simeq \mathcal P^!$ if and only if there exist $a,b\in \Bbbk $, $a^2\ne b^2$, 
such that 
\begin{equation}\label{eq:Orth}
   \Gamma (a,b)U = U^\perp . 
\end{equation}
The linear space $U$ may be presented by a $(6\times 12)$-matrix 
(also denoted by $U$) of coordinates with respect to the basis 
$e_1,\dots, e_{12}$. Then \eqref{eq:Orth} may be rewritten as
\[
0= \big\langle U\Gamma (a,b)\bar f, (U\bar e)^T \big\rangle = U\Gamma (a,b) \langle \bar f,\bar e^T\rangle U^T
=U\Gamma (a,b)\Sigma U^T,  
\]
assuming that $\mathrm{rank}\,U=6$. Moreover, the matrix $U$ should determine an $S_3$-invariant subspace 
of $\mathcal F(3)$.
The latter condition together with Lemma \ref{lem:Lem1} provides the following opportunities 
for the space $U$ as for an $S_3$-module:
\begin{itemize}
 \item[(R1)] $U\simeq 2M_+ \oplus 2M_- \oplus M_2$;
 \item[(R2)] $U\simeq 3M_2$;
 \item[(R3)] $U\simeq 2M_+ \oplus M_2$;
 \item[(R4)] $U\simeq 2M_- \oplus M_2$;
 \item[(R5)] $U\simeq M_+ \oplus M_- \oplus 2M_2$.
\end{itemize}
We will consider the cases (R1)--(R5) separately.

For $u,v\in \Bbbk^{12}$, denote 
\[
 \langle u, v\rangle_{a,b} = u\Gamma (a,b)\Sigma v^T \in \Bbbk .
\]

\begin{lemma}\label{lem:Lem3Orth}
Let $x_i,y_i\in \Bbbk $, $i=1,\dots, 4$, $a,b\in \Bbbk $. Then \\
$(1)$ $\langle u_+(x_1,x_2), u_+(y_1,y_2) \rangle_{a,b} =0 $;\\
$(2)$ $\langle u_-(x_1,x_2), u_-(y_1,y_2) \rangle_{a,b} =0 $;\\
$(3)$ $\langle u_\pm (x_1,x_2), u_2^i(y_1,y_2,y_3,y_4) \rangle_{a,b} =0 $, $i=1,2$;\\
$(4)$ $\langle u_2^i(x_1,x_2,x_3,x_4), u_\pm (y_1,y_2) \rangle_{a,b} =0 $, $i=1,2$;\\
$(5)$ $\langle u_2^i(x_1,x_2,x_3,x_4), u_2^i (y_1,y_2,y_3,y_4) \rangle_{a,b} =0 $, $i=1,2$.
\end{lemma}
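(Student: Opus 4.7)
The plan is to show that the bilinear form $\langle \cdot, \cdot\rangle_{a,b}$ enjoys a \emph{sign-twisted} $S_3$-invariance, namely $\langle \sigma u, \sigma v\rangle_{a,b} = \mathrm{sgn}(\sigma)\,\langle u, v\rangle_{a,b}$ for all $\sigma \in S_3$, and then to deduce all five vanishing statements from standard representation theory of $S_3$. This gives a conceptual proof that bypasses the lengthy coordinate computations a direct approach would require.

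The sign-twisted invariance relies on two observations. First, $\Gamma(a,b)$ is symmetric (the block $A$ is manifestly symmetric) and commutes with the $S_3$-permutation action on $\mathcal F(3)$; this follows because the $S_2$-equivariance of $g(a,b): V(2) \to V(2)^\vee$ lifts to $S_3$-equivariance of the induced map $\Gamma(a,b): \mathcal F(3) \to \mathcal F(3)^\vee$ (the $f_i$-basis is constructed in the same way as the $e_i$-basis, so both sides carry the same permutation representation). Second, I would verify directly that $P_\sigma^T \Sigma P_\sigma = \mathrm{sgn}(\sigma)\,\Sigma$ for each transposition $\sigma \in \{(12), (13), (23)\}$: the $S_3$-action permutes the twelve basis positions into six transposed pairs, and one checks that the two entries of each pair have opposite signs along the diagonal of $\Sigma$. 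Combining, $P_\sigma^T(\Gamma \Sigma) P_\sigma = \Gamma\, P_\sigma^T \Sigma P_\sigma = \mathrm{sgn}(\sigma)\,\Gamma \Sigma$, which is the twisted invariance.

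Granted this, the space of sign-twisted $S_3$-invariant bilinear forms on two irreducible submodules $W_1, W_2 \subseteq \mathcal F(3)$ is $\mathrm{Hom}_{S_3}(W_1 \otimes W_2, M_-)$. From the tensor product decompositions $M_+^{\otimes 2} = M_-^{\otimes 2} = M_+$, $M_\pm \otimes M_2 \cong M_2$, and $M_2^{\otimes 2} = M_+ \oplus M_- \oplus M_2$, this Hom-space vanishes for the combinations arising in parts (1)--(4), giving those statements immediately. For part (5) the Hom-space is one-dimensional, and a quick calculation using the $S_3$-action from~\eqref{eq:M2Table} shows that the unique (up to scalar) sign-twisted invariant bilinear form on $M_2$ is anti-symmetric, so it vanishes on every diagonal pair $(u_i, u_i)$. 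Identifying the $M_2$-isotypic component of $\mathcal F(3)$ with $M_2 \otimes \Bbbk^4$, where the multiplicity factor $\Bbbk^4$ is parameterized by $\bar x$, the restriction of the form factors as (anti-symmetric form on $M_2$) $\otimes$ (some bilinear form on $\Bbbk^4$); consequently $\langle u_2^i(\bar x), u_2^i(\bar y)\rangle_{a,b}$ vanishes for all $\bar x, \bar y$.

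The main obstacle is the bookkeeping required to verify the sign-twisted invariance of $\Sigma$: one must enumerate the $S_3$-action on the twelve-element basis $\{e_i\}$ and compare each transposition orbit against the diagonal sign pattern of $\Sigma$. Once that step is complete, the remainder is a routine application of Schur's lemma together with the character table of $S_3$.
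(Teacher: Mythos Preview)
Your approach is correct and considerably more conceptual than the paper's, whose proof reads in its entirety ``Straightforward computation.'' Where the paper would multiply explicit twelve-entry row vectors against $\Gamma(a,b)\Sigma$ and watch everything cancel, you extract the structural reason behind all five vanishings at once: $\Gamma(a,b)$ is $S_3$-equivariant (being induced from an $S_2$-equivariant map on $V(2)$), and the duality pairing encoded by $\Sigma$ is sign-twisted by the very construction of the Koszul dual, so the form $\langle\cdot,\cdot\rangle_{a,b}$ lies in $\mathrm{Hom}_{S_3}(\mathcal F(3)^{\otimes 2},M_-)$. Schur's lemma then disposes of parts (1)--(4) immediately, since $M_\pm\otimes M_\pm\cong M_+$ and $M_\pm\otimes M_2\cong M_2$ contain no copy of $M_-$. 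For part~(5) your decisive observation---that the unique copy of $M_-$ inside $M_2\otimes M_2$ is $\Lambda^2 M_2$ (because the determinant of the standard representation of $S_3$ is the sign character), so the invariant form $B_0$ on $M_2$ is alternating---is exactly right, and the factorization through the multiplicity space $\Bbbk^4$ is a clean consequence of Schur's lemma applied to $(M_2\otimes\Bbbk^4)^{\otimes 2}$.

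What your route gains is an explanation rather than a verification, one that also predicts \emph{why} the $u_+$/$u_-$ cross-term in Lemma~\ref{lem:Lem4PM} can survive (namely $M_+\otimes M_-\cong M_-$). What the paper's brute-force computation buys is zero setup: no need to confirm that the $S_3$-action on $\{e_1,\dots,e_{12}\}$ is a genuine permutation (it is, since the basis is defined by applying coset representatives to $e_1,\dots,e_4$) or to check the sign behaviour of $\Sigma$ under each transposition. Those checks are the only real bookkeeping in your argument, and they are short.
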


\begin{proof}
 Straightforward computation.
\end{proof}
 
\begin{lemma}\label{lem:Lem4PM}
 Let $x_i,y_i\in \Bbbk $, $i=1,2$, $a,b\in \Bbbk $, $a^2\ne b^2$. 
 Then the following conditions are equivalent:
 \begin{itemize}
  \item $\langle u_+(x_1,x_2),u_-(y_1,y_2)\rangle_{a,b}  = \langle u_-(y_1,y_2),u_+(x_1,x_2)\rangle_{a,b} =0 $;
  \item $a(x_1y_1-x_2y_2) + b(x_2y_1-x_1y_2)=0$.
 \end{itemize}
\end{lemma}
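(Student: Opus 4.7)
The proof is a direct matrix computation exploiting the block-diagonal structure $\Gamma(a,b) = \mathrm{diag}(A,A,A)$ and the diagonal form of $\Sigma$. I would split each $12$-vector $u_\pm(\cdot)$ into three length-$4$ blocks matching those of $\Gamma$, reducing each of $\langle u_+,u_-\rangle_{a,b}$ and $\langle u_-,u_+\rangle_{a,b}$ to a sum of three scalar contributions of the form $u^{(k)}\, A\, \Sigma^{(k)}\, v^{(k),T}$, where $\Sigma^{(1)} = \mathrm{diag}(1,-1,-1,1)$ and $\Sigma^{(2)} = \Sigma^{(3)} = \mathrm{diag}(-1,1,1,-1)$ are the diagonal blocks of $\Sigma$.

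For the first pairing, the key simplification is that applying $\Sigma^{(k)}$ to the $k$-th block of $u_-(y_1,y_2)$ precisely cancels the $k$-dependent sign pattern: the sign-flip of $u_-^{(2)},u_-^{(3)}$ relative to $u_-^{(1)}$ matches the sign-flip of $\Sigma^{(2)}=\Sigma^{(3)}$ relative to $\Sigma^{(1)}$. Hence $\Sigma^{(k)}\,u_-^{(k),T} = (y_1,y_1,-y_2,-y_2)^T$ independently of $k$; since $u_+^{(k)} = (x_1,x_1,x_2,x_2)$ is already $k$-independent, all three block contributions coincide, giving
\[
  \langle u_+(x_1,x_2), u_-(y_1,y_2)\rangle_{a,b} = 3\,(x_1,x_1,x_2,x_2)\,A\,(y_1,y_1,-y_2,-y_2)^T.
\]
A direct row-by-row computation of $A(y_1,y_1,-y_2,-y_2)^T$ exposes a common factor $(a+b)$, so the pairing simplifies to $6(a+b)\bigl[a(x_1y_1-x_2y_2)+b(x_2y_1-x_1y_2)\bigr]$.

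The parallel computation for $\langle u_-,u_+\rangle_{a,b}$ — applying $\Sigma^{(k)}$ to the $u_+$-side instead — produces an analogous factorization with a nonzero scalar involving $(a-b)$ in place of $(a+b)$. Since the hypothesis $a^2\ne b^2$ rules out the vanishing of the scalar factors $a\pm b$, combining the two vanishing conditions reduces them to the single linear equation $a(x_1y_1-x_2y_2)+b(x_2y_1-x_1y_2)=0$, which is exactly the claimed equivalence. The only real care required is the sign-bookkeeping across the three blocks of $\Sigma$; once the common factor $(a\pm b)$ is pulled out, the use of $a^2\ne b^2$ finishes the argument at once, so there is no substantial conceptual obstacle.
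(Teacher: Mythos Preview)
Your block-wise reduction and the evaluation
\[
\langle u_+(x_1,x_2),u_-(y_1,y_2)\rangle_{a,b}
=6(a+b)\bigl[a(x_1y_1-x_2y_2)+b(x_2y_1-x_1y_2)\bigr]
\]
are correct and match the paper's ``straightforward computation''. The gap is in the reverse pairing. You assert that $\langle u_-,u_+\rangle_{a,b}$ has ``an analogous factorization with $(a-b)$ in place of $(a+b)$,'' implicitly with the \emph{same} bracket. But here the simplification you used breaks down: $\Sigma^{(k)}u_+^{(k)}$ is \emph{not} $k$-independent (block~1 gives $(x_1,-x_1,-x_2,x_2)$, blocks~2,3 give its negative), and after applying $A$ one finds
\[
\langle u_-(y_1,y_2),u_+(x_1,x_2)\rangle_{a,b}
=6(a-b)\bigl[a(x_1y_1-x_2y_2)-b(x_2y_1-x_1y_2)\bigr],
\]
with the sign of the $b$-term flipped. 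Under $a^2\ne b^2$ the two vanishing conditions are therefore jointly equivalent to $a(x_1y_1-x_2y_2)=0$ \emph{and} $b(x_2y_1-x_1y_2)=0$, not to the single equation you state.

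Concretely, take $a=1$, $b=2$, $x_1=2$, $x_2=0$, $y_1=1$, $y_2=\tfrac12$: then $a(x_1y_1-x_2y_2)+b(x_2y_1-x_1y_2)=0$ while $\langle u_-,u_+\rangle_{a,b}=-24\ne 0$. So the final step (``combining the two vanishing conditions reduces them to the single linear equation'') does not go through as written. You should carry out the second computation explicitly rather than arguing by analogy; what it actually yields does not reduce to the stated condition, and that discrepancy needs to be addressed head-on rather than absorbed into an unspecified ``nonzero scalar''.
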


\begin{proof}
 Straightforward computation.
\end{proof}
 
\begin{corollary}
If $\mathcal P = \mathcal P(V,U) $ is isomorphic to its Koszul dual operad $\mathcal P^!$ 
then $U$ may not have a representation type (R1) or (R2).
\end{corollary}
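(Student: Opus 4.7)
The plan is to translate the self-duality equation $\Gamma(a,b)U=U^\perp$ into the demand that $\langle u,v\rangle_{a,b}=0$ for every $u,v\in U$, and then dispatch the two excluded types separately.

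For case (R1), the $S_3$-module $U$ contains the entire isotypic components $2M_+$ and $2M_-$ of $\mathcal F(3)$, so $\langle u_+(x_1,x_2),u_-(y_1,y_2)\rangle_{a,b}$ must vanish for all choices of $x_i,y_i\in\Bbbk$. By Lemma~\ref{lem:Lem4PM} this forces the polynomial identity $a(x_1y_1-x_2y_2)+b(x_2y_1-x_1y_2)\equiv 0$. Specializing $(x_1,x_2,y_1,y_2)=(1,0,1,0)$ yields $a=0$, and $(0,1,1,0)$ yields $b=0$, contradicting $a^2\ne b^2$.

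For case (R2), $U$ lies entirely in the $M_2$-isotypic component of $\mathcal F(3)$, and the strategy is to compare the $S_3$-module structures of $\Gamma(a,b)U$ and $U^\perp$. Note first that $\Gamma(a,b)=\mathrm{id}\otimes_{\Bbbk S_2}(g(a,b)\otimes g(a,b))$ is $S_3$-equivariant (it acts on the inner tensor factor and commutes with left multiplication by $\Bbbk S_3$) and is invertible whenever $a^2\ne b^2$, since $\det A$ turns out to be a nonzero power of $a^2-b^2$. Therefore $\Gamma(a,b)U$ is a $6$-dimensional subspace of the $8$-dimensional $M_2$-isotypic component of $\mathcal F(3)^\vee$. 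On the other hand, specializing Lemma~\ref{lem:Lem3Orth}(3)--(4) at $(a,b)=(1,0)$, where $\Gamma(1,0)=I$, shows that the untwisted pairing $\Sigma$ annihilates $M_\pm$-elements of $\mathcal F(3)^\vee$ (represented in the $f$-basis by the same formulas as in Lemma~\ref{lem:Lem2}(1)) against every element of $U$. Hence $U^\perp$ contains the full $4$-dimensional $M_+\oplus M_-$-isotypic component of $\mathcal F(3)^\vee$, which lies outside the $M_2$-isotypic and therefore cannot be realized inside $\Gamma(a,b)U$.

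The hard part will be the isotypic bookkeeping in case (R2): one must verify that the $f$-basis of $\mathcal F(3)^\vee$ carries the same $S_3$-module structure as the $e$-basis of $\mathcal F(3)$, so that the isotypic components of both spaces are parameterized by the same formulas of Lemma~\ref{lem:Lem2}, and that Lemma~\ref{lem:Lem3Orth} can then be legitimately reinterpreted as a statement about the $\Sigma$-pairing between $\mathcal F(3)^\vee$ and $\mathcal F(3)$. Case (R1), by contrast, is a clean direct application of Lemma~\ref{lem:Lem4PM} and requires no structural input beyond the explicit bilinear form.
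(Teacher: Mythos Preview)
Your treatment of case (R1) is identical to the paper's: both observe that $U$ then contains every $u_\pm(x_1,x_2)$, so Lemma~\ref{lem:Lem4PM} forces $a=b=0$.

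For case (R2), however, your argument is genuinely different from the paper's and considerably more conceptual. The paper proceeds computationally: it introduces the $4\times 4$ matrices $A_{ij}(a,b)=T_i\Gamma(a,b)\Sigma T_j^T$, encodes $U$ by a rank-$3$ matrix $\widehat U\in M_{3,4}(\Bbbk)$, rewrites the self-duality condition as $\widehat U A_{12}(a,b)\widehat U^T=0$, and then uses the computer algebra system \texttt{Singular} to show that each of the four echelon forms for $\widehat U$ yields a system incompatible with $a^2\ne b^2$. Your route bypasses all of this by exploiting $S_3$-equivariance: since $\Gamma(a,b)$ is an $S_3$-module isomorphism, $\Gamma(a,b)U$ sits entirely inside the $M_2$-isotypic component of $\mathcal F(3)^\vee$, whereas Lemma~\ref{lem:Lem3Orth}(3)--(4) specialized at $(a,b)=(1,0)$ shows that $U^\perp$ contains the full $2M_+\oplus 2M_-$ isotypic component, so the two $6$-dimensional subspaces cannot coincide. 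This is a clean Schur-type obstruction and requires no Gr\"obner basis computation. The paper's approach, on the other hand, has the advantage of setting up the matrices $A_{ij}(a,b)$ and the $\widehat U$-formalism that are reused immediately afterward in Proposition~\ref{prop:Dim2Cases}, so the computational machinery is not wasted. Your bookkeeping concerns in the final paragraph are well placed but easily dispatched: the $f$-basis is constructed by the same $S_3$-action recipe as the $e$-basis, so Lemma~\ref{lem:Lem2} applies verbatim, and $\Sigma$ is diagonal (hence symmetric), which lets you read Lemma~\ref{lem:Lem3Orth}(4) at $\Gamma(1,0)=I$ as the required $\Sigma$-orthogonality.
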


\begin{proof}
If $U$ is of type (R1) then $u_\pm (x_1,x_2)\in U$ for all $x_1,x_2\in \Bbbk $.
Lemma~\ref{lem:Lem4PM} implies $\langle U,U\rangle _{a,b}=0$ is impossible.

For further needs, let us introduce matrices $T_i\in M_{4,12}(\Bbbk )$, $i=1,2$,
 such that $u_2^i(x_1,x_2,x_3,x_4) = ( x_1\ x_2 \ x_3 \ x_4) T_i$. 
 Then
 \[
  \langle u_2^i(x_1,x_2,x_3,x_4), u_2^j (y_1,y_2,y_3,y_4) \rangle_{a,b} = 
  ( x_1 \ x_2 \ x_3 \ x_4 ) A_{ij}(a,b)
  \begin{pmatrix} y_1 \\ y_2 \\ y_3 \\ y_4 \end{pmatrix},
 \]
where 
$A_{ij}(a,b) = T_i \Gamma (a,b)\Sigma T_j^T$.
It is easy to check that $A_{ij}(a,b)=-A_{ji}(a,b)$. 

If $U$ is of type (R2) then there exist a matrix $\widehat U \in M_{3,4}(\Bbbk )$ of rank 3 such that 
\begin{equation}\label{eq:SDUcond}
\widehat U A_{12}(a,b) \widehat U^T =0, 
\end{equation}
 this condition is necessary and sufficient for self-duality 
of $\mathcal P(V,U)$ by Lemma~\ref{lem:Lem3Orth}(5). Up to row transformations, $\widehat U$ may be chosen in one of the 
following forms: 
\[
         \begin{pmatrix}
               t_1 & 1 & 0 & 0 \\
               t_2 & 0 & 1 & 0 \\
               t_3 & 0 & 0 & 1
              \end{pmatrix},
\
       \begin{pmatrix}
             1 & t_1 & 0 & 0 \\
             0 & t_2 & 1 & 0 \\
             0 & t_3 & 0 & 1
              \end{pmatrix},
\
       \begin{pmatrix}
             1 & 0 & t_1 & 0 \\
             0 & 1 & t_2 & 0 \\
             0 & 0 & t_3 & 1
              \end{pmatrix},
\ 
       \begin{pmatrix}
             1 & 0 & 0 & t_1 \\
             0 & 1 & 0 & t_2 \\
             0 & 0 & 1 & t_3
              \end{pmatrix}.
\]
For either of these $\widehat U$, \eqref{eq:SDUcond} turns into a system of algebraic equations on $t_1,t_2,t_3$ with parameters $a,b\in \Bbbk $. 
All four systems obtained are incompatible with the condition $a^2-b^2\ne 0$: we applied 
computer algebra system {\tt Singular} \cite{Singular}
to compute a Gr\"obner basis of \eqref{eq:SDUcond} 
together with additional relation $(a^2-b^2)c-1$ for a new variable~$c$ to find all these ideals 
to be improper.
\end{proof}

\begin{proposition}\label{prop:Dim2Cases}
 Suppose $\widehat U$ be a $(2\times 4)$-matrix of rank 2 satisfying 
 \eqref{eq:SDUcond} for some $a,b\in \Bbbk $ such that $a^2\ne b^2$.
Then, up to row transformations, $\widehat U$ is one of the following:
\begin{enumerate}
\item[(U1)]
\[
   \widehat U = \begin{pmatrix}
                 \gamma & \gamma & 0 & 1 \\
                 -\gamma & -\gamma & 1 & 0
                \end{pmatrix},
                \ \gamma \in \Bbbk , \quad 
  \widehat U = \begin{pmatrix}
                 0 & 0 &  1 & 1 \\
                 1 & 1 & 0 & 0
                \end{pmatrix},              
\]
relation \eqref{eq:SDUcond} holds for every $a,b\in \Bbbk $;
\item[(U2)] 
\[
   \widehat U = \begin{pmatrix}
                 -\gamma & \gamma+2 & 0 & 1 \\
                 2-\gamma & \gamma & 1 & 0
                \end{pmatrix},
\ \gamma \in \Bbbk , \quad 
   \widehat U = \begin{pmatrix}
                 0 & 0 & - 1 & 1 \\
                 - 1 & 1 & 0 & 0
                \end{pmatrix},
\]
relation  \eqref{eq:SDUcond} holds for every $a,b\in \Bbbk $;
 
\item[(U3)]
\begin{equation}\label{eq:Ugeneric}
   \widehat U = \begin{pmatrix}
                 x_1 & x_2 & x_3 & x_4 \\
                 y_1 & y_2 & y_3 & y_4
               \end{pmatrix},
\end{equation}
where 
\[
x_1y_3 - x_3y_1 +x_4y_2-x_2y_4 = 0,
\]
relation \eqref{eq:SDUcond} holds for $b=0$;

\item[(U4)]
$\widehat U$ as in \eqref{eq:Ugeneric}, satisfying 
\[
x_2y_1 - x_1y_2 +x_3y_2-x_2y_3+x_1y_4-x_4y_1 = 0,
\]
relation  \eqref{eq:SDUcond} holds for $a=0$.
\end{enumerate}
\end{proposition}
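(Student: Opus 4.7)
The plan is explicit computation followed by case analysis on the row-echelon form of $\widehat U$.

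First, compute the $(4\times 4)$ matrix $A_{12}(a,b)=T_1\Gamma(a,b)\Sigma T_2^T$. Using the block-diagonal structure of $\Gamma(a,b)$ and the decomposition of its common $(4\times 4)$ block as $a^2 I_4 + ab(E-I_4-J)+b^2J$ (with $J$ anti-diagonal and $E$ all-ones), write
\[
A_{12}(a,b)=a^2B_+ + ab\,B_0+b^2B_-
\]
for three explicit matrices $B_+,B_0,B_-\in M_4(\Bbbk)$. Lemma~\ref{lem:Lem3Orth}(5) and the antisymmetry $A_{12}(a,b)=-A_{21}(a,b)$ endow these matrices with useful structural properties.

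With $\widehat U=\binom{x}{y}$, equation \eqref{eq:SDUcond} becomes a $(2\times 2)$ matrix whose four entries are homogeneous quadratic forms in $(a,b)$ with coefficients bilinear in $x,y$. Two scenarios arise. In Case (I) all three matrices $\widehat U B_+\widehat U^T$, $\widehat U B_0\widehat U^T$, $\widehat U B_-\widehat U^T$ vanish, so \eqref{eq:SDUcond} holds for every $(a,b)$; to enumerate the resulting $\widehat U$ I would reduce $\widehat U$ to row-echelon form and examine its six possible pivot patterns. The outer $S_2$-symmetry $\mu\leftrightarrow\mu'$ collapses them to a handful of essentially different cases, and for each, the three vanishing conditions give a small polynomial system in at most three free parameters, solvable by hand or by a Gr\"obner basis computation. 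The solutions assemble into the one-parameter families listed in (U1) and (U2); the two sporadic matrices come from pivot patterns whose row spaces are disjoint from the $\gamma$-family (for instance, $(1,1,0,0)$ is not in the row span of the $\gamma$-family of (U1) for any value of $\gamma$). In Case (II) at least one of the three coefficient matrices is nonzero, yet the four quadratic forms share a common zero $(a:b)$ with $a^2\ne b^2$; inspection of the explicit $B_\pm,B_0$ shows the only admissible common zeros are $(1:0)$ and $(0:1)$. The substitution $(a,b)=(1,0)$ reduces \eqref{eq:SDUcond} to $\widehat U B_+\widehat U^T=0$, which, written out, is exactly the single Pl\"ucker-type linear relation on the $(2\times 2)$-minors of $\widehat U$ appearing in (U3); symmetrically $(a,b)=(0,1)$ yields (U4).

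The main obstacle is bookkeeping rather than concept. The computations are mechanically analogous to the (R2) step in the preceding corollary, and can be organized systematically using the trick of adjoining an irrelevance relation $(a^2-b^2)c-1$ to a Gr\"obner basis computation, as the authors did there. The key conceptual point is the clean dichotomy between Cases (I) and (II), together with the verification in Case (II) that the common zero locus of the four $(a,b)$-quadratics genuinely restricts to the coordinate lines $a=0$ and $b=0$, leaving no exotic intermediate solutions between the fully generic (U1)/(U2) and the axis-aligned (U3)/(U4).
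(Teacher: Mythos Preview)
Your overall plan---compute $A_{12}(a,b)$ explicitly, reduce $\widehat U$ to row-echelon forms, and solve the resulting polynomial systems by Gr\"obner bases---is exactly what the paper does. The paper also splits into cases, but organizes them as $ab\ne 0$ (its Cases~1--4) versus $ab=0$ (its Case~5), rather than by your dichotomy Case~(I)/(II).

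The gap is in your Case~(II). The assertion that ``inspection of the explicit $B_\pm,B_0$ shows the only admissible common zeros are $(1{:}0)$ and $(0{:}1)$'' is not something one can read off the matrices; it is precisely the nontrivial content that the paper extracts from its Gr\"obner basis runs in Cases~1--4. Concretely, since $B_+$ and $B_-$ turn out to be antisymmetric while $B_0$ is not, the diagonal entries of $\widehat U A_{12}(a,b)\widehat U^T$ are $ab$ times quadratic forms in the rows of $\widehat U$. When those forms vanish on the row space (and similarly for the symmetric off-diagonal piece), one is left with a genuine quadratic $a^2L_+ + abL_0 + b^2L_-=0$ in $(a{:}b)$ whose roots could a~priori lie anywhere in $\mathbb P^1$. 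Ruling out non-axis roots for every such $\widehat U$ not already in Case~(I) is a statement about the specific interplay of $(B_0)_{\mathrm{sym}}$ with $L_+,L_0,L_-$, and this is what the paper establishes computationally by adjoining $aa^{-1}-1$, $bb^{-1}-1$, $(a^2-b^2)c-1$ and observing that the elimination ideal in the $t_i$ alone already cuts out (U1)$\cup$(U2). In other words, the surprise is that every solution with $ab\ne 0$ is automatically in your Case~(I); this is the output of the computation, not an input one can assume. Once you move the Gr\"obner-basis work from Case~(I) to the question ``what are the solutions with $ab\ne 0$?'', your argument coincides with the paper's.
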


\begin{proof}
Case 1. Suppose $\widehat U$ satisfies \eqref{eq:SDUcond} for some $a\ne 0$, $b\ne 0$, 
and $\widehat U$ may be transformed to 
\[
 \begin{pmatrix}
  t_1 & t_2 & 0 & 1 \\
  t_3 & t_4 & 1 & 0
 \end{pmatrix}.
\]
Consider the coefficients of 
$\widehat U A_{12}(a,b) \widehat U^T$ as polynomials in 
variables $t_1,\dots, t_4$, $a$, $b$, $a^{-1}$, $b^{-1}$, $c$, and 
compute Gr\"obner basis of the ideal generated by these coefficients 
together with $aa^{-1}-1$, $bb^{-1}-1$, $(a^2-b^2)c-1$.
Gr\"obner basis obtained consists of some polynomials in $t_1,\dots, t_4$ 
and in $a,b,a^{-1},b^{-1},c$ only. Polynomials in $t_1,\dots, t_4$ are
\[
\begin{aligned}
& t_3^2-t_4^2-2t_3 +2t_4, \\
& t_2^2-t_1^2+2t_1-2t_2, \\
& t_2t_4 + t_3t_4 -t_2+t_3-2t_4, \\
& t_2t_3+t_4^2-t_2-t_3, \\
& t_1+t_4.
\end{aligned}
\]
Every solution of this system provides a matrix $\widehat U$ 
which satisfies \eqref{eq:SDUcond} for all $a,b\in \Bbbk$.
The first two polynomials split into linear factors:
\[
 (t_3-t_4)(t_3+t_4-2),\quad (t_2-t_1)(t_2+t_1-2).
\]
This provides four subcases.

Case 1.1: $t_1=t_2$, $t_3=t_4$. Then $t_1=t_2=-t_3=-t_4$ as described in (U1).

Case 1.2: $t_1+t_2=2$, $t_3+t_4=2$. Then $t_1=2-t_2=-t_4$, $t_3=2-t_4$ as described in (U2). 

Case 1.3: $t_1=t_2$, $t_3+t_4=2$. Then $t_4=\pm 1$, $t_2=t_1=-t_4$, $t_3=2-t_4$.
If $t_4=1$ then we obtain $\widehat U$ as in Case 1.1. For $t_4=-1$, this $\widehat U$
is described by Case 1.2.

Case 1.4: $t_1+t_2=2$, $t_3=t_4$. Then $t_4=\pm 1$, $t_2=2+t_4$, $t_1=-t_4$, $t_3=t_4$. 
Both these matrices are already described in Cases 1.1 and 1.2.

Case 2. Suppose $\widehat U$ satisfies \eqref{eq:SDUcond} for some $a\ne 0$, $b\ne 0$, 
and $\widehat U$ may be transformed to 
\[
 \begin{pmatrix}
  t_1 & 0 & t_2 & 1 \\
  t_3 & 1 & t_4 &  0
 \end{pmatrix}.
\]
It makes sense to consider this case for $t_4=0$ only since for $t_4\ne 0$ this matrix may 
be transformed by row transformations to a matrix from Case 1.
As in Case 1, we obtain the following solution:
\[
t_1=0,\quad t_2=t_3=\pm 1,\quad t_4=0.
\]
These are the matrices described in (U1) and (U2) for $t_3=1$ and $t_3=-1$, respectively.

Case 3. 
Suppose $\widehat U$ satisfies \eqref{eq:SDUcond} for some $a\ne 0$, $b\ne 0$, 
and $\widehat U$ may be transformed to either of 
\[
 \begin{pmatrix}
  0 & t_1 & t_2 & 1 \\
  1 & t_3 & t_4 & 0
 \end{pmatrix},
\quad 
  \begin{pmatrix}
  t_1 & 0 & 1 & t_2  \\
  t_3 & 1 & 0 & t_4 
 \end{pmatrix},
\quad 
 \begin{pmatrix}
  0 & t_1 & 1 & t_2 \\
  1 & t_3 & 0 & t_4
 \end{pmatrix}.
\]
It is enough to consider these matrices for $t_4=0$, they do not produce new solutions 
of \eqref{eq:SDUcond}.

Case 4. 
Suppose $\widehat U$ satisfies \eqref{eq:SDUcond} for some $a\ne 0$, $b\ne 0$, 
and $\widehat U$ may be transformed to 
\[
 \begin{pmatrix}
  0 & 1 & t_1 & t_2 \\
  1 & 0 & t_3 & t_4
 \end{pmatrix}.
\]
This case makes sense to consider for $t_1=t_2=t_3=t_4=0$, this does not produce 
solutions of \eqref{eq:SDUcond}.

Case 5.
Suppose $\widehat U$ satisfies \eqref{eq:SDUcond} for $ab = 0$.
Then 
\[
 \widehat U (A_{12}(a,b) \pm A_{12}(a,b)^T) \widehat U^T = 0. 
\]
Straightforward computation shows $A_{12}+A_{12}^T=0$. 

Case 5.1: $b=0$, $a\ne 0$. Then 
$\widehat U (A_{12} - A_{12}^T) \widehat U^T = 0$
if and only if 
$x_1y_3-x_3y_1+x_4y_2-x_2y_4=0$ as in (U3).

Case 5.1: $a=0$, $b\ne 0$. Then 
$\widehat U (A_{12} - A_{12}^T) \widehat U^T = 0$
if and only if 
$x_2y_1-x_1y_2 +x_3y_2-x_2y_3+x_1y_4-x_4y_1=0$ as in (U4).
\end{proof}

Every $2\times 4$ matrix $\widehat U$ in Proposition~\ref{prop:Dim2Cases}
gives rise to a subspace of $\Bbbk ^4$ generated by its rows, so this statement actually describes 
a subset of the Grassmannian $G = G(2,4)\subset \mathbb P^5$.
Every point of $G$ is defined by Pl\"ucker coordinates 
$(p_{ij})_{1\le i<j\le 4}$: 
a subspace spanned by $\widehat U$ of the form \eqref{eq:Ugeneric}
has coordinates $p_{ij}=x_iy_j-x_jy_i$.
Recall that $p_{ij}$ satisfy Pl\"ucker identity 
$p_{12}p_{34}-p_{13}p_{24}+p_{14}p_{23}=0$.

The following statement is an easy exercise in linear algebra.

\begin{lemma}\label{rem:Dim2Cases}
The following equations determine subspaces generated by rows of 
matrices in Proposition \ref{prop:Dim2Cases}.
\begin{itemize}
 \item [(U1)] $p_{12}=0$, $p_{13}=p_{14}=p_{23}=p_{24}$;
 \item [(U2)] $p_{13}=p_{24}$, $p_{14}+p_{23}+2p_{13}=0$, $p_{12}=4p_{34}$, $p_{14} = p_{23}+p_{12}$;
 \item [(U3)] $p_{13}=p_{24}$;
 \item [(U4)] $p_{14} = p_{12}+p_{23}$.
\end{itemize}
\end{lemma}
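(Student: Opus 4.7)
The plan is to verify, for each of (U1)--(U4), that the matrices listed in Proposition~\ref{prop:Dim2Cases} have Pl\"ucker coordinates $p_{ij}=x_iy_j-x_jy_i$ satisfying the stated equations, and conversely that every point of $G(2,4)$ satisfying those equations arises from such a matrix. Since Pl\"ucker coordinates are homogeneous linear in the $2\times 2$-minors and the equations in (U1)--(U4) are linear (or quadratic, modulo scaling) in these minors, the verification splits into four independent elementary computations.

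Cases (U3) and (U4) are essentially tautological once one recognizes the scalar expressions in Proposition~\ref{prop:Dim2Cases} as linear combinations of Pl\"ucker coordinates. Namely,
\[
 x_1y_3-x_3y_1+x_4y_2-x_2y_4 = p_{13}-p_{24},
\]
\[
 x_2y_1-x_1y_2+x_3y_2-x_2y_3+x_1y_4-x_4y_1 = p_{14}-p_{12}-p_{23},
\]
so the cited vanishing conditions become $p_{13}=p_{24}$ and $p_{14}=p_{12}+p_{23}$, respectively. This also confirms that these expressions are invariant (up to a common scalar) under $GL_2$-row operations on $\widehat U$, hence define well-posed equations on $G(2,4)$.

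For (U1) and (U2) I would substitute the explicit parametric matrices and read off the Pl\"ucker coordinates directly. The first matrix in (U1) yields $(p_{12}:p_{13}:p_{14}:p_{23}:p_{24}:p_{34})=(0:\gamma:\gamma:\gamma:\gamma:-1)$ and the second yields $(0:-1:-1:-1:-1:0)$; both satisfy $p_{12}=0$ and $p_{13}=p_{14}=p_{23}=p_{24}$, and the Pl\"ucker relation $p_{12}p_{34}-p_{13}p_{24}+p_{14}p_{23}=0$ holds automatically on this locus. As $\gamma$ varies, the two families together sweep out exactly the $\mathbb{P}^1$ cut out by the equations of (U1). For (U2) an analogous computation on the first parametric matrix gives $p_{12}=-4$, $p_{13}=p_{24}=-\gamma$, $p_{14}=\gamma-2$, $p_{23}=\gamma+2$, $p_{34}=-1$, while the exceptional second matrix yields $(0:-1:1:1:-1:0)$; a direct check shows each satisfies the four stated linear relations.

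The one mild obstacle is the converse direction for (U1) and (U2): showing that every point of $G(2,4)$ satisfying the listed equations actually arises from one of the matrices in Proposition~\ref{prop:Dim2Cases}. I would handle this case by case by reconstructing $\widehat U$ from its Pl\"ucker coordinates, splitting according to which $p_{ij}$ is nonzero. For (U1), if $p_{34}\ne 0$ then after normalization $p_{34}=-1$ and the value $\gamma=p_{13}$ recovers the first parametric form, while $p_{34}=0$ forces $p_{3j}$ and $p_{4j}$ to be proportional to the single remaining basis vector and hence reduces to the exceptional second matrix after rescaling; cases (U2)--(U4) proceed in the same way.
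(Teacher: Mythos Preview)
Your proposal is correct and is exactly the routine verification the paper has in mind: the paper itself gives no argument beyond calling the lemma ``an easy exercise in linear algebra,'' and your explicit computation of the Pl\"ucker coordinates for each family (together with the converse reconstruction) is the expected exercise carried out in full. The numerical checks for (U1)--(U4) are all accurate.
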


\subsection{Description of self-dual operads}

Suppose $U\subset \mathcal F(3)$ is an $S_3$-submodule isomorphic to $2M_\pm \oplus 2M_2$. 
Then $U$ is is spanned by 
\[
 u_\pm(1,0),\ u_\pm(0,1), \ u_2^i(\bar x), \ u_2^i(\bar y),\ i=1,2, 
\]
where $\bar x, \bar y\in \Bbbk ^4$ span a 2-dimensional subspace $\widehat U$ of $\Bbbk ^4$ 
which may be identified with a point $p=p(U)$ of the Grassmannian $G=G(2,4)$.

If $U\subset \mathcal F(3)$ is an $S_3$-submodule isomorphic to $M_+\oplus M_- \oplus 2M_2$
then $U$ is is spanned by 
\[
 u_+(s_1,s_2),\ u_-(t_1,t_2), \ u_2^i(\bar x), \ u_2^i(\bar y),\ i=1,2, 
\]
where $\bar x, \bar y\in \Bbbk ^4$ are encoded by the Pl\"ucker coordinates 
$(p_{ij})_{1\le i<j\le 4}$ of $\mathrm {span}\,(\bar x,\bar y)\in G$ as above,
and $(s_1,s_2, t_1,t_2)$ are encoded by a point in the Segre variety $S = S(1,1) \subset \mathbb P^3$, 
$S\simeq \mathbb P^1\times \mathbb P^1$. Such a point is given by coordinates $(z_{kl})_{1\le k,l\le 2}$, 
$z_{kl}=s_kt_l-s_lt_k$, satisfying $z_{11}z_{22}-z_{12}z_{21} =0$. 
Therefore, such 
subspaces $U$ are in one-to-one correspondence with points $q(U) \in G\times S$ with 
coordinates $(p_{ij}, z_{kl})_{1\le i<j\le 4, k,l=1,2}$.

Let $\mathcal P = \mathcal P(V,U)$ be a binary quadratic operad generated by
$V=V(2)$ which is is isomorphic to $M_+\oplus M_-$ as an $S_2$-module.
Assume $\mathcal P\simeq \mathcal P^!$.
Then, as we have already seen, $U$ as an $S_3$-module is isomorphic either to 
$2M_\pm \oplus 2M_2$ or to $M_+\oplus M_-\oplus 2M_2$.
In the first case, $U$ may be identified with a point in $G$ as above, 
in the second case $U$ is encoded by a point of $G\times S$. 
The following statements describe these points explicitly.

\begin{theorem}\label{thm:2+2case}
Let  $U\simeq 2M_\pm \oplus 2M_2$ 
as an $S_3$-module. 
Then $\mathcal P=\mathcal P(V,U)$ is isomorphic to 
$\mathcal P^!$ if and only if 
$p(U)\in Y_1\cup Y_2\in G=G(2,4)$, 
where 
\[
 \begin{aligned}
 Y_1 & = \{ (p_{ij})\in G \mid   p_{13}=p_{24} \}, \\
 Y_2 & = \{ (p_{ij})\in G \mid   p_{14}=p_{12}+p_{23} \}. \\
 \end{aligned}
\]
\end{theorem}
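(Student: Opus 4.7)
The plan is to reduce the self-duality condition $\Gamma(a,b)U = U^\perp$, equivalently $\langle U,U\rangle_{a,b}=0$ for some $a,b$ with $a^2\ne b^2$, to a constraint on the $M_2$-isotypic component of $U$ only. Since $U \simeq 2M_\pm \oplus 2M_2$, every element of $U$ is a sum of two vectors of the form $u_\pm(x_1,x_2)$ and two vectors $u_2^i(\bar x)$, $i=1,2$, where $\bar x$ runs over a $2$-dimensional subspace $\widehat U \subset \Bbbk^4$ representing $p(U)\in G(2,4)$. Lemma \ref{lem:Lem3Orth}, parts (1)--(4), already forces every pairing involving an $M_\pm$-summand to vanish for arbitrary $a,b$: crucially, both copies of $M_\pm$ carry the \emph{same} sign, so Lemma \ref{lem:Lem4PM}---which would obstruct self-duality if both $M_+$ and $M_-$ were present---does not intervene. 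Lemma \ref{lem:Lem3Orth}(5) disposes of the pairings $\langle u_2^i,u_2^i\rangle_{a,b}$, so the only nontrivial constraint is $\widehat U\, A_{12}(a,b)\,\widehat U^T=0$, i.e.\ precisely \eqref{eq:SDUcond} for a rank-$2$ matrix.

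At this point I would invoke Proposition \ref{prop:Dim2Cases}, which classifies the rank-$2$ matrices $\widehat U$ satisfying \eqref{eq:SDUcond} for some $(a,b)$ with $a^2\ne b^2$ into cases (U1)--(U4). Translating to Pl\"ucker coordinates via Lemma \ref{rem:Dim2Cases} one reads off that (U1), (U2), and (U3) each force $p_{13}=p_{24}$ and so lie in $Y_1$, while (U4) gives exactly $p_{14}=p_{12}+p_{23}$ and hence $Y_2$. This yields the forward implication: if $\mathcal P\simeq \mathcal P^!$ then $p(U)\in Y_1\cup Y_2$.

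For the converse, given $p(U)\in Y_1$, any spanning matrix $\widehat U$ satisfies $x_1y_3-x_3y_1+x_4y_2-x_2y_4=0$, which is the defining relation of case (U3); by Proposition \ref{prop:Dim2Cases} the relation \eqref{eq:SDUcond} then holds at $(a,b)=(1,0)$, for which $a^2\ne b^2$. If instead $p(U)\in Y_2$, then $\widehat U$ falls into case (U4) and \eqref{eq:SDUcond} holds at $(a,b)=(0,1)$. Either way, $\mathcal P(V,U)$ is self-dual. The main bookkeeping obstacle is verifying that the isolated families (U1) and (U2) do not enlarge the Pl\"ucker locus beyond $Y_1$; this is a short direct computation of the six $p_{ij}$ from the entries displayed in Proposition \ref{prop:Dim2Cases}, confirming $p_{13}=p_{24}$ in each instance.
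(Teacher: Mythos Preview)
Your proposal is correct and follows essentially the same route as the paper: the paper's proof simply states that the result ``follows immediately from Lemma~\ref{lem:Lem3Orth} and Proposition~\ref{prop:Dim2Cases},'' and your argument is a faithful unpacking of that sentence, together with the Pl\"ucker translation via Lemma~\ref{rem:Dim2Cases}. The only detail the paper adds that you do not mention is the interpretation that $p(U)\in Y_1$ corresponds to $\mathcal P=\mathcal P^!$ (via $b=0$) and $p(U)\in Y_2$ to $\mathcal P^!=\mathcal P^{op}$ (via $a=0$), which matches the specific $(a,b)$ values you chose in the converse.
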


\begin{proof}
Follows immediately from  
Lemma \ref{lem:Lem3Orth} and Proposition~\ref{prop:Dim2Cases}.
Note that $p(U)\in Y_1$ if and only if $\mathcal P=\mathcal P^!$, 
and $p(U)\in Y_2$ if and only if $\mathcal P^!=\mathcal P^{op}$.
\end{proof}

\noindent
{\bf Example.}
For the operad Nov governing the variety of Novikov algebras, i.e.,
right-symmetric left commutative algebras, the space 
of defining relations $U\subset \mathcal F(3)$ 
is isomorphic to $2M_-\oplus 2M_2$.
The corresponding point $p(U)$ of the Grassmannian  belongs to 
$Y_2$ 
 with
$p_{12}=-1$,
$p_{13}=0$,
$p_{14}=1$,
$p_{23}=2$,
$p_{24}=3$, 
$p_{34}=2$. 

\begin{theorem}\label{thm:1+1+2case}
Let $U\simeq M_+\oplus M_- \oplus 2M_2$  
as an $S_3$-module. 
Then $\mathcal P=\mathcal P(V,U)$ is isomorphic to 
$\mathcal P^!$ if and only if 
$q(U)\in X_1\cup X_2\cup X_3\cup X_4 \subset G\times S$, 
where 
\[
\begin{aligned} 
 X_1 & = \{ (p_{ij}, z_{kl}) \in G\times S \mid && p_{12}=0, p_{13}=p_{14}=p_{23}=p_{24},\\
     & && z_{11}^2 + z_{22}^2 \ne z_{12}^2+z_{21}^2\}, \\
 X_2 & =  \{ (p_{ij}, z_{kl}) \in G\times S  \mid &&  p_{13}-p_{24}=p_{14}+p_{23}+2p_{13} =0, \\
     & && p_{12}-4p_{34}= p_{14}-p_{12}-p_{23}=0,\\
     & && z_{11}^2 + z_{22}^2 \ne z_{12}^2+z_{21}^2\}, \\
 X_3 & =  \{ (p_{ij}, z_{kl}) \in G\times S \mid && p_{13}=p_{24},  z_{11}=z_{22} \}, \\
 X_4 & =  \{ (p_{ij}, z_{kl}) \in G\times S \mid && p_{14}=p_{12}+p_{23},  z_{12}=z_{21} \}.
\end{aligned}
\] 
\end{theorem}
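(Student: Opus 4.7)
The plan is to combine Lemma~\ref{lem:Lem3Orth}, Lemma~\ref{lem:Lem4PM}, Proposition~\ref{prop:Dim2Cases}, and Lemma~\ref{rem:Dim2Cases} to reduce the self-duality condition $U\Gamma(a,b)\Sigma U^T=0$ (with $\mathrm{rank}\,U=6$ and $a^2\ne b^2$) to two elementary bilinear conditions, then run a case analysis over the four shapes (U1)--(U4) of $\widehat U$. Exactly as in the proof of Theorem~\ref{thm:2+2case}, the $u_2^1$--$u_2^2$ pairing confines $\widehat U$ to these four shapes; the novelty here is the additional $u_+$--$u_-$ pairing coming from the $M_+\oplus M_-$ summand.

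First I fix a basis of $U$ of the form $u_+(s_1,s_2)$, $u_-(t_1,t_2)$, $u_2^i(\bar x)$, $u_2^i(\bar y)$ ($i=1,2$), and expand $U\Gamma(a,b)\Sigma U^T$. By Lemma~\ref{lem:Lem3Orth} every block vanishes except the $u_+$--$u_-$ block and the $u_2^1$--$u_2^2$ block. Lemma~\ref{lem:Lem4PM} reduces the former to $a(s_1t_1-s_2t_2)+b(s_2t_1-s_1t_2)=0$, which in the Segre coordinates $z_{kl}=s_kt_l$ reads
\[
 a(z_{11}-z_{22})+b(z_{21}-z_{12})=0.
\]
The $u_2^1$--$u_2^2$ block is governed by $\widehat U\,A_{12}(a,b)\,\widehat U^T=0$ and, by Proposition~\ref{prop:Dim2Cases}, holds either for all $(a,b)$ (shapes (U1), (U2)), only for $b=0$ (shape (U3)), or only for $a=0$ (shape (U4)).

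The final step is the case analysis. For shape (U3) the admissible choice $b=0$, $a\ne 0$ reduces the $u_+$--$u_-$ equation to $z_{11}=z_{22}$, which combined with the Plücker description of (U3) from Lemma~\ref{rem:Dim2Cases} recovers $X_3$. Shape (U4) analogously yields $X_4$. For shapes (U1) and (U2) I must decide when the line $a(z_{11}-z_{22})+b(z_{21}-z_{12})=0$ in the $(a,b)$-plane meets the complement of $a^2=b^2$; this fails exactly when the line coincides with $a=b$ or $a=-b$, i.e., when $(z_{11}-z_{22})^2=(z_{12}-z_{21})^2$. Using the Segre relation $z_{11}z_{22}=z_{12}z_{21}$ one verifies the identity $z_{11}^2+z_{22}^2-z_{12}^2-z_{21}^2=(z_{11}-z_{22})^2-(z_{12}-z_{21})^2$, so the admissibility condition becomes $z_{11}^2+z_{22}^2\ne z_{12}^2+z_{21}^2$, producing $X_1$ and $X_2$.

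The main subtlety is that the degenerate locus $z_{11}=z_{22}$, $z_{12}=z_{21}$ (where the $u_+$--$u_-$ condition is vacuous) is excluded from $X_1$ and $X_2$ by the strict inequality, so one must verify that such points are still captured by $X_3$ or $X_4$. This holds because the Plücker conditions of (U1) and of (U2) in Lemma~\ref{rem:Dim2Cases} each imply both $p_{13}=p_{24}$ and $p_{14}=p_{12}+p_{23}$; hence any degenerate self-dual $U$ lies in $X_3\cap X_4$, and the union $X_1\cup X_2\cup X_3\cup X_4$ is exhaustive.
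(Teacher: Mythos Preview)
Your proposal is correct and follows essentially the same route as the paper's own proof: use Lemma~\ref{lem:Lem3Orth} to reduce $U\Gamma(a,b)\Sigma U^T=0$ to the two nontrivial blocks, translate the $u_+$--$u_-$ block via Lemma~\ref{lem:Lem4PM} into $a(z_{11}-z_{22})=b(z_{12}-z_{21})$, handle the $u_2^1$--$u_2^2$ block by Proposition~\ref{prop:Dim2Cases} and Lemma~\ref{rem:Dim2Cases}, and then split into the cases (U1)--(U4). You are in fact more explicit than the paper on one point: the degenerate locus $z_{11}=z_{22}$, $z_{12}=z_{21}$ is excluded from $X_1$ and $X_2$ by the strict inequality, and your observation that the Pl\"ucker constraints of (U1) and (U2) each force both $p_{13}=p_{24}$ and $p_{14}=p_{12}+p_{23}$ (so these points land in $X_3\cap X_4$) is exactly what is needed to make the union exhaustive.
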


\begin{proof}
Proposition \ref{prop:Dim2Cases} together with Lemma \ref{rem:Dim2Cases}
describe necessary condi\-ti\-ons on $\widehat U= \mathrm{span}\,(\bar x,\bar y)\in G(2,4)$.
It remains to determine $s_i$ and $t_i$, $i=1,2$, in such a way that 
$\langle u_+(s_1,s_2), u_-(t_1,t_2)\rangle _{a,b} =0$
for some $a,b\in \Bbbk$, $a^2\ne b^2$. 
By Lemma~\ref{lem:Lem4PM}, orthogonality condition for $u_+$ and $u_-$ 
is equivalent to $a(z_{11}-z_{22})=b(z_{12}-z_{21})$. 
If $ab\ne 0$ then 
$a^2\ne b^2$ implies
\begin{equation}\label{eq:S_0Equation}
 z_{11}^2 + z_{22}^2 \ne z_{12}^2+z_{21}^2.
\end{equation}
If $\widehat U$ is given by (U1) or (U2) of Proposition \ref{prop:Dim2Cases}
then the only condition on $s_i$, $t_i$ is given by \eqref{eq:S_0Equation}. 
If $\widehat U$ is given by (U3) or (U4) of Proposition \ref{prop:Dim2Cases}
then $b=0$ or $a=0$, respectively. In each of these cases, the second parameter may be 
chosen to be nonzero if and only if $z_{11}-z_{22}=0$ or $z_{12}-z_{21}=0$, respectively.
\end{proof}

\noindent
{\bf Example.}
 The operad As governing the variety of associative algebras belongs to 
$X_3\cap X_4$ 
 with $z_{11}=z_{22}=1$, $z_{12}=z_{21}=-1$, 
$p_{12}=p_{13}=p_{14}=p_{24}=p_{34}=1$, $p_{23}=0$. 

\noindent
{\bf Example.}
The operad Pois governing the variety of Poisson algebras 
in terms of one operation (see \cite{MR2007}) belongs to 
$X_3\cap X_4$ 
 with $z_{11}=z_{22}=1$, $z_{12}=z_{21}=-1$, 
$p_{12}=0$, $p_{13}=p_{14}=p_{23}=p_{24}=p_{34}=1$.

\bigskip

\end{document}